\newtheorem{thm}{Theorem}
\newtheorem{lem}[thm]{Lemma}
\newtheorem{prop}[thm]{Proposition}
\newtheorem{definition}[thm]{Definition}
\newcommand{\R}{\mathbb{R}}
\newcommand{\Z}{\mathbb{Z}}
\newcommand{\zerovec}{\mathbb{O}}
\newcommand{\conv}{\operatorname{conv}}
\newcommand{\verts}{\operatorname{vert}}
\newcommand{\grad}{\partial}
\newcommand{\interior}{\operatorname{int}}
\newcommand{\relint}{\operatorname{relint}}
\newcommand{\bd}{\operatorname{bd}}
\newcommand{\F}{\mathcal{F}}
\title{Optimality certificates for convex minimization\\ and Helly numbers}
\author{Amitabh Basu \and Michele Conforti \and Gérard Cornuéjols \and Robert Weismantel \and Stefan Weltge}
\begin{document}

\maketitle

\begin{abstract}
We consider the problem of minimizing a convex function over a subset of $\mathbb{R}^n$ that is not necessarily convex (minimization of a convex function over the integer points in a polytope is a special case).
We define a family of duals for this problem and show that, under some natural conditions, strong duality holds for a dual problem in this family that is more restrictive than previously considered duals.
\end{abstract}

\section{Introduction}

Insights obtained through duality theory have spawned efficient optimization algorithms (combinatorial and numerical) which simultaneously work on a pair of primal and dual problems. Striking examples are Edmonds' seminal work in combinatorial optimization, and interior-point algorithms for numerical/continuous optimization.

Compared to duality theory for continuous optimization, duality theory for mixed-integer optimization is still underdeveloped. Although the linear case has been extensively studied, see, e.g.,~\cite{blair1984constructive,blair1985constructive,wolsey1981b,wolsey1981integer}, nonlinear integer optimization duality was essentially unexplored until recently. An important step was taken by Morán et al. for conic mixed-integer problems~\cite{moran2012strong}, followed up by Baes et al.~\cite{baes2015duality} who presented a duality theory for general convex mixed-integer problems.
The approach taken by Moran et al. was essentially algebraic, drawing on the theory of subadditive functions.
Baes et al. took a more geometric viewpoint and developed a duality theory based on lattice-free polyhedra.
We follow the latter approach.

\medskip

\noindent
Given $ S \subseteq \R^n $ and a convex function $ f : \R^n \to \R $, we consider the problem
\begin{equation}
    \label{eqOriginalProblem}
    \inf_{s \in S} f(s).
\end{equation}
We describe a geometric dual object that can be used to certify optimality of a solution to~\eqref{eqOriginalProblem}.  For simplicity, let us consider the situation when the infimum of $f$ over $\R^n$ and over $S$ is attained, and let $x_0 \in \arg\inf_{x\in \R^n}f(x)$. We say that a closed set $C$ is an {\em $S$-free neighborhood of $x_0$} if $x_0 \in \interior(C)$ and $\interior(C) \cap S = \emptyset$. Using the convexity of $f$, it follows that for any $\bar s \in S$ and any $C$ that is an $S$-free neighborhood of $x_0$, the following holds: \begin{equation}\label{eq:weak-duality} f(\bar s) \geq \inf_{z\in \bd(C)} f(z) =: L(C),\end{equation} where $\bd(C)$ denotes the boundary of $C$ (to see this, consider the line segment connecting $\bar s$ and $x_0$ and a point at which this line segment intersects $\bd(C)$).  Thus, an $S$-free neighborhood of $x_0$ can be interpreted as a ``dual object" that provides a {\em lower bound} of the type~\eqref{eq:weak-duality}.  As a consequence, the following is true.

\begin{prop}[Strong duality]\label{prop:strong-duality} If there exist $\bar s\in S$ and $C\subseteq \R^n$ that is an $S$-free neighborhood of $x_0$, such that equality holds in~\eqref{eq:weak-duality}, then $\bar s$ is an optimal solution to~\eqref{eqOriginalProblem}.
\end{prop}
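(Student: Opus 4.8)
The plan is to show that under the stated equality hypothesis, the value $f(\bar s)$ coincides with the infimum of $f$ over all of $S$, which is exactly the assertion that $\bar s$ is optimal. The key observation is that the weak-duality inequality~\eqref{eq:weak-duality} has already done most of the work: it provides a uniform lower bound $L(C)$ on $f$ over the entire feasible set $S$, and this lower bound is derived from the $S$-free neighborhood $C$ independently of the particular point $\bar s$.

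First I would record the consequence of weak duality in its full strength. Since $C$ is an $S$-free neighborhood of $x_0$, the inequality~\eqref{eq:weak-duality} holds for \emph{every} feasible point; that is, $f(s) \geq L(C)$ for all $s \in S$. Taking the infimum over $s \in S$ yields
\[
    \inf_{s \in S} f(s) \geq L(C).
\]
This is the crucial step, and it is essentially immediate from the excerpt, since the derivation of~\eqref{eq:weak-duality} via the segment from $\bar s$ to $x_0$ made no use of any special property of $\bar s$ beyond its membership in $S$.

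Next I would invoke the equality hypothesis. By assumption, equality holds in~\eqref{eq:weak-duality} for the particular feasible point $\bar s$, so $f(\bar s) = L(C)$. Combining this with the lower bound from the previous step gives
\[
    f(\bar s) = L(C) \leq \inf_{s \in S} f(s) \leq f(\bar s),
\]
where the final inequality simply records that $\bar s \in S$. Hence all quantities are equal, and in particular $f(\bar s) = \inf_{s \in S} f(s)$, which is precisely the statement that $\bar s$ attains the infimum in~\eqref{eqOriginalProblem}.

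I do not anticipate a genuine obstacle here: the proposition is a formal consequence of weak duality together with the equality assumption, and the only thing one must be careful about is to apply the lower bound $L(C)$ uniformly over all of $S$ rather than only at $\bar s$. The one point worth stating explicitly is why~\eqref{eq:weak-duality} holds for arbitrary feasible points and not merely for $\bar s$; this follows from the geometric argument already sketched in the excerpt, namely that the segment joining any $s \in S$ to the interior point $x_0$ must cross $\bd(C)$, so convexity of $f$ forces $f(s) \geq \inf_{z \in \bd(C)} f(z) = L(C)$.
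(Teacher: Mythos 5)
Your proof is correct and follows exactly the reasoning the paper intends: the paper states the proposition as an immediate consequence of the weak-duality bound~\eqref{eq:weak-duality}, which, as you note, holds uniformly for all $s \in S$, so the chain $f(\bar s) = L(C) \le \inf_{s\in S} f(s) \le f(\bar s)$ settles it. Nothing is missing.
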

\smallskip

This motivates the definition of a dual optimization problem to~\eqref{eqOriginalProblem}. For any family $\F$ of $S$-free neighborhoods of $x_0$, define the $\F$-dual of~\eqref{eqOriginalProblem} as
\begin{equation}\label{pro:dual-unconstrained}
\sup_{C \in \mathcal{F}} L(C).
\end{equation}

Assuming very mild conditions on $S$ and $f$ (e.g., when $S$ is a closed subset of $\R^n$ disjoint from $\arg\inf_{x\in\R^n}f(x)$), it is straightforward to show that if $\F$ is the family of \emph{all} $S$-free neighborhoods of $x_0$, then strong duality holds, i.e., there exists $\bar s\in S$ and $C \in \F$ such that the condition in Proposition~\ref{prop:strong-duality} holds. However, the entire family of $S$-free neighborhoods is too unstructured to be useful as a dual problem. Moreover, the inner optimization problem~\eqref{eq:weak-duality} of minimizing on the boundary of $C$ can be very hard if $C$ has no structure other than being $S$-free. Thus, we would like to \textit{identify subfamilies $\F$ of $S$-free neighborhoods that still maintain strong duality, while at the same time, are much easier to work with inside a primal-dual framework.} We list below three subclasses that we expect to be useful in this line of research. First, we need the concept of a {\em gradient polyhedron}:

\begin{definition}\label{def:grad-poly}
Given a set of points $z_1, \ldots, z_k \in \R^n$, $$Q := \{x\in \R^n: \langle a_i, x-z_i \rangle \leq 0,\;\; i=1, \ldots, k\}$$ is said to be a {\em gradient polyhedron} of $z_1, \ldots, z_k$ if for every $i=1, \ldots, k$, $a_i \in \partial f(z_i)$, i.e., $a_i$ is a subgradient of $f$ at $z_i$.
\end{definition}
We consider the following families.
\begin{enumerate}
\item[--] The family $\F_{\max}$ of maximal convex $S$-free neighborhoods of $x_0$, i.e., those $S$-free neighborhoods that are convex, and are not strictly contained in a larger convex $S$-free neighborhood.
\item[--] The family $\F_{\grad}$ of convex $S$-free neighborhoods that are also gradient polyhedra for some finite set of points in $\R^n$.
\item[--] The family $\F_{\grad,S}$ of convex $S$-free neighborhoods that are also gradient polyhedra for some finite set of points in $S$.
\end{enumerate}

We propose the above families so as to leverage a recent surge of activity analyzing their structure; 
the surveys~\cite{basu2015geometric} and Chapter 6 of~\cite{conforti2014integer} provide good overviews and references for this whole line of work. This well-developed theory provides powerful mathematical tools to work with these families. As an example, this prior work shows that for most sets  $S$ that occur in practice (which includes the integer and mixed-integer cases), the family $\F_{\max}$ only contains polyhedra. This is good from two perspectives: \begin{itemize}\vspace{-5pt}\item[--] polyhedra are easier to represent and compute with than general $S$-free neighborhoods, \vspace{-5pt}\item[--] the inner optimization problem~\eqref{eq:weak-duality} of computing $L(C)$ becomes the problem of solving finitely many continuous convex optimization problems, corresponding to the facets of $C$.\end{itemize} 

Of course, the first question to settle is whether these three families actually enjoy strong duality, i.e., do we have strong duality between~\eqref{eqOriginalProblem} and the $\F_{\max}$-dual, $\F_{\grad}$-dual and $\F_{\grad,S}$-dual? It turns out that the main result in~\cite{baes2015duality} shows that for the mixed-integer case, i.e., $S = C\cap(\Z^{n_1}\times \R^{n_2})$ for some convex set $C$, the $\F_{\grad}$-dual enjoys strong duality under conditions of the Slater type from continuous optimization. It is not hard to strengthen their result to also show that the $\F_{\max} \cap \F_{\grad}$-dual is a strong dual, under some additional assumptions.

In this paper, we give conditions on $S$ and $f$ such that strong duality holds for the dual problem \eqref{pro:dual-unconstrained} associated with $\F_{\max}\cap \F_{\grad}\cap\F_{\grad,S}$. Below we give an explanation as to why this family is very desirable. If these conditions on $S$ and $f$ are met, our result is stronger than Baes et al.~\cite{baes2015duality}. For example, when $S$ is the set of integer points in a compact convex set and $f$ is any convex function, our certificate is a stronger one. However, our conditions on $S$ and $f$ do not cover certain mixed-integer problems; whereas, the certificate from Baes et al. still exists in these settings. Nevertheless, it can be shown that in such situations, a strong certificate like ours does not necessarily exist.



%

\begin{definition} A \emph{strong optimality certificate of size $ k $} for \eqref{eqOriginalProblem} is a set of points $ z_1,\dotsc,z_k
\in S $ together with subgradients $ a_i \in \grad f(z_i) $ such that
\begin{align}
    \label{strongProperty1}
    & Q := \{ x \in \R^n : \langle a_i, x - z_i \rangle \le 0, \ i=1,\dotsc,k \} \text{ is S-free},\\
    \label{strongProperty2}
    & \langle a_i, z_j - z_i \rangle < 0 \text{ for all } i \ne j.
\end{align}
\end{definition}
%

Recall that $ a \in \grad f(z) $ if $ f(x) \ge f(z) + \langle a,x - z\rangle $ holds for all $x\in \R^n $.
Since $ Q $ is $ S $-free, for every $ s \in S $ there is some $ i \in [k] $ such that $ \langle a_i,s - z_i \rangle \ge
0 $ and hence $ f(s) \ge f(z_i) $.
Thus, Property~\eqref{strongProperty1} implies that $ \min_{s\in S} f(s) = \min_{i\in [k]} f(z_i) $ holds.
In other words, given a strong optimality certificate, we can compute~\eqref{eqOriginalProblem} by simply evaluating $
f(z_1),\dotsc,f(z_k) $. This implies that if a strong certificate exists, then the infimum of $f$ over $S$ is attained.

In order to verify that $ z_1,\dotsc,z_k $ together with $ a_1,\dotsc,a_k $ form a strong optimality certificate, one
has to check whether the polyhedron $ Q $ is $ S $-free.
Deciding whether a \emph{general} polyhedron is $ S $-free might be a difficult task.
However, Property~\eqref{strongProperty2} ensures that $ Q $ is \emph{maximal} $ S $-free, i.e., $ Q $ is not properly
contained in any other $ S $-free closed convex set:
Indeed, Property~\eqref{strongProperty2} implies that $ Q $ is a full-dimensional polyhedron and that  $\{ x \in Q :
\langle a_i,x \rangle = 0 \} $ is a facet of $ Q $ containing $ z_i \in S $ in its relative interior for every $ i \in
[k] $.
Since every closed convex set $ C $ that properly contains $ Q $ contains the relative interior of at least one
facet of $ Q $ in its interior, $ C $ cannot be $ S $-free.

For particular sets $S$, the properties of $ S $-free sets that are maximal have been extensively studied and are much
better understood than general $ S $-free sets.
For instance, if $S = (\R^d \times \Z^n) \cap C $ where $ C $ is a closed convex subset of $\R^{n+d}$, maximal $S$-free
sets are polyhedra with at most $2^n$ facets~\cite{dey2011maximal}.
In particular, if $ S = \Z^2 $ the characterizations in~\cite{Hurkens90,dw2008} yield a very simple
algorithm to detect whether a polyhedron is maximal $ \Z^2 $-free.

\medskip

In order to state our main result, we need the notion of the \emph{Helly number} $ h(S) $ of the set $S$, which is the
largest number $ m $ such that there exist convex sets $ C_1,\dotsc,C_m \subseteq \R^n $ satisfying
\begin{equation}
    \label{eqHellyDefinition}
    \bigcap_{i \in [m]} C_i \cap S = \emptyset \quad \text{ and } \bigcap_{i \in [m] \setminus \{j\}} C_i \cap S \ne
    \emptyset \text{ for every } j \in [m].
\end{equation}

\begin{thm}
    \label{thmMain}
    Let $ S \subseteq \R^n $ and $ f : \R^n \to \R $ be a convex function such that
    \begin{itemize}
        \item[(i)] $ \zerovec \notin \grad f(s) $ for all $ s \in S $,
        \item[(ii)] $ h(S) $ is finite, and
        \item[(iii)] for every polyhedron $ P \subseteq \R^n $ with $ \interior(P) \cap S \ne \emptyset $ there exists
        an $ s^\star \in \interior(P) \cap S $ with $ f(s^\star) = \inf_{s \in \interior(P) \cap S} f(s) $.
    \end{itemize}
    Then there exists a strong optimality certificate of size at most $ h(S) $.
\end{thm}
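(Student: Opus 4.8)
The plan is to isolate what really needs work: the bound on the size is essentially free, and the content lies in \emph{constructing} a configuration with the two defining properties. The key preliminary observation is that~\eqref{strongProperty1} and~\eqref{strongProperty2} already encode a critical family in the sense of~\eqref{eqHellyDefinition}. Writing $ O_i := \{ x \in \R^n : \langle a_i, x - z_i \rangle < 0 \} $ for the open half-space, property~\eqref{strongProperty1} is exactly $ \bigcap_i O_i \cap S = \emptyset $, while property~\eqref{strongProperty2} says $ z_j \in O_i $ for every $ i \ne j $. Since $ z_j \in S $ and $ z_j \notin O_j $, each $ z_j $ certifies that $ \bigcap_{i \ne j} O_i \cap S \ne \emptyset $. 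Hence the convex sets $ O_1,\dotsc,O_k $ form a family of the form~\eqref{eqHellyDefinition}, and the definition of the Helly number forces $ k \le h(S) $. So it suffices to produce \emph{any} finite family of points $ z_i \in S $ and subgradients $ a_i \in \grad f(z_i) $ satisfying~\eqref{strongProperty1} and~\eqref{strongProperty2}; hypothesis~(ii) then bounds its size with no further argument.

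To build such a configuration, hypothesis~(iii) is the natural engine, and I would first attempt a greedy construction. Set $ Q_0 := \R^n $, and as long as $ Q_m := \bigcap_{i \le m} \{ x : \langle a_i, x - z_i \rangle \le 0 \} $ is not $ S $-free, hypothesis~(iii) provides a point $ z_{m+1} \in \interior(Q_m) \cap S $ minimizing $ f $ over $ \interior(Q_m) \cap S $; choose $ a_{m+1} \in \grad f(z_{m+1}) $, which is nonempty since $ f $ is finite and convex and nonzero by hypothesis~(i), so each constraint defines a genuine half-space. Two properties come for free. First, $ z_{m+1} \in \interior(Q_m) $ gives $ \langle a_i, z_{m+1} - z_i \rangle < 0 $ for all $ i \le m $, which is one half of~\eqref{strongProperty2}. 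Second, since $ \interior(Q_{m+1}) \subseteq \interior(Q_m) $, the optimal values are nondecreasing, $ f(z_1) \le f(z_2) \le \cdots $; and if the process stops at step $ k $, then $ Q := Q_k $ is $ S $-free, giving~\eqref{strongProperty1}.

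The crux is the remaining inequalities $ \langle a_j, z_i - z_j \rangle < 0 $ for $ i < j $. The subgradient inequality yields $ \langle a_j, z_i - z_j \rangle \le f(z_i) - f(z_j) $, so these are strict precisely when the values increase \emph{strictly}, $ f(z_1) < f(z_2) < \cdots $. The greedy rule only delivers weak monotonicity, and the obstruction is exactly the degenerate case where the minimum over the smaller region is again attained at the previous value and $ a_j $ is aligned so that $ z_i $ lies on the boundary hyperplane of the $ j $-th constraint. This degeneracy is where finiteness of $ h(S) $ is indispensable, and not merely for bookkeeping: for $ f = \tfrac12 \lVert \cdot \rVert^2 $ and $ S $ a sphere, every point of $ S $ has the same objective value, every subgradient is forced ($ \grad f(s) = \{ s \} $), and no finite $ S $-free gradient polyhedron exists at all, consistently with the fact that the Helly number of a sphere is infinite. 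Thus some nondegeneracy forced by $ h(S) < \infty $ must be extracted. I would resolve this by refining the selection in~(iii) with a tie-breaking secondary criterion to force strict improvement, exploiting that an equal-value alignment makes $ a_j $ a common subgradient along an affine segment of $ f $, which lets one relocate or delete the offending base point.

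The second genuine difficulty is termination and, more basically, existence: the greedy need not stop in finitely many steps (for the sphere it never does), and finiteness of $ h(S) $ does not by itself reduce the \emph{infinite} family of all subgradient half-spaces to a finite one. Here I would supplement the greedy viewpoint with an existence argument: using hypotheses~(i) and~(iii) together with a compactness argument, first exhibit a \emph{finite} $ S $-free gradient polyhedron with base points in $ S $, then pass to a subfamily of its facets that is minimal subject to remaining $ S $-free. Minimality makes this subfamily critical in the sense of~\eqref{eqHellyDefinition}, hence of size at most $ h(S) $, and finally one would move each base point into the relative interior of its facet --- again via the minimization in~(iii) --- to secure~\eqref{strongProperty2}. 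I expect the compactness step that produces the initial finite polyhedron, and its interaction with the tie-breaking needed for the strict inequalities, to be the main technical hurdle.
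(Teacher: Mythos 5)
Your skeleton --- a greedy construction driven by hypothesis~(iii), plus a Helly-number argument for the size bound --- matches the paper's, and your opening observation that a completed certificate yields a critical family of open half-spaces in the sense of~\eqref{eqHellyDefinition} is correct and is a clean alternative to the paper's Lemma~\ref{lemHelly}. But you correctly identify the two real difficulties (the strict inequalities $\langle a_j, z_i - z_j\rangle < 0$ in the equal-value case, and termination) and then resolve neither, so there is a genuine gap. On the first: you say you would ``refine the selection with a tie-breaking secondary criterion'' or ``relocate or delete the offending base point,'' but no such criterion is produced, and this is precisely where the paper's content lies. The paper's device is to let $C_k := \{x : f(x) \le t_k\}$ be the current sublevel set, to choose among the minimizers one whose minimal face $F_{C_k}(z_k)$ has \emph{largest dimension}, and to take $a_k \in \relint(\grad f(z_k))$ so that this face is exactly $\{x \in C_k : \langle a_k, x - z_k\rangle = 0\}$. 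Then an equal-value violation $\langle a_k, z_i - z_k\rangle = 0$ forces $F_i \subsetneq F_k$ while the maximality of $\dim(F_i)$ forces $F_i = F_k$, a contradiction. Nothing in your sketch supplies a substitute for this.

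On termination the gap is more serious, because your proposed repair goes through a step that is not available. Your half-space Helly argument only applies once $S$-freeness holds, so it cannot bound the number of greedy iterations; the paper instead proves (using the same face/tie-breaking machinery) that at \emph{every} intermediate stage the points satisfy $V = \conv(V)\cap S = \verts(\conv(V))$, and then Lemma~\ref{lemHellyStyleBoundDoesNotExist} --- rather, Lemma~\ref{lemHelly} applied to $C_i := \conv(V\setminus\{v_i\})$ --- gives $k \le h(S)$ before the algorithm halts, which is simultaneously the termination proof and the size bound. Your fallback of ``a compactness argument to first exhibit a finite $S$-free gradient polyhedron'' assumes what is essentially the theorem: the hypotheses impose no compactness on $S$ or on the level sets of $f$, and producing \emph{any} finite $S$-free gradient polyhedron with base points in $S$ is the whole problem. (Your subsequent minimal-subfamily step also witnesses criticality with points of $S$ interior to the relaxed polyhedron, not with the base points, and moving base points into relative interiors of facets while staying in $S$ with valid subgradients is not justified.) In short: right architecture, both load-bearing steps missing.

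\begin{rem}
The citation of Lemma~\ref{lemHelly} in the previous paragraph is the paper's convex-hull lemma; the phrase preceding it should be read as a correction, not as a reference to a separate result.
\end{rem}
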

Let us first comment on the assumptions in Theorem~\ref{thmMain}.
If $ \zerovec \in \grad f(s^\star) $ for some $s^\star \in S$, then $ s^\star $ is an optimal
solution to~\eqref{eqOriginalProblem}, as well as to its continuous relaxation over $\R^n$. An easy certificate of optimality in this case is the subgradient $\zerovec$.
%
%
%
A quite general situation in which~(ii) is always satisfied is the case $ S = (\R^d \times \Z^n) \cap C $ where $ C
\subseteq \R^{d+n} $ is a closed convex set.
In this situation, one has $ h(S) \le 2^n (d+1) $. The characterization of closed sets $S$ for which $h(S)$ is finite has received a lot of attention, see, e.g.,~\cite{averkov2013maximal}.
Finally, note that~(iii) implies that the minimum in~\eqref{eqOriginalProblem} actually exists.
As an example, (iii) is fulfilled whenever $ S $ is discrete (every bounded subset of $ S $ is finite) and the set $ \{
x \in \R^n : f(x) \le \alpha \} $ is bounded and non-empty for some $ \alpha \in \R $ (implying that the set is actually
bounded for every $ \alpha \in \R $). This latter condition is satisfied, e.g., when $f$ is strictly convex and has a minimizer. Another situation where (iii) is satisfied is when $S$ is a finite set, e.g., $S = C \cap \Z^n$ where $C$ is a compact convex set.

Also, if conditions (i) and (ii) hold, but (iii) does not hold, a strong optimality certificate may not exist. For example, consider $S = \{x \in \Z^2: \sqrt{2}{x_1} - x_2 \geq 0,\;\;x_1 \geq \frac12,\;\;x_2 \geq 0\}$ and $f(x) = \sqrt{2}x_1 - x_2$. In this case, no strong optimality certificate can exist, as the infimum of $f$ over $S$ is $0$, but it is not attained by any point in $S$.



\section{Proof of Theorem~\ref{thmMain}}

We make use of the following observation.
Let $ \conv(\cdot) $ denote the convex hull and $ \verts(P) $ denote the set of vertices of a polyhedron $ P $.
\begin{lem}
    \label{lemHelly}
    Let $ S \subseteq \R^n $ and $ V \subseteq S $ finite such that $ V = \conv(V) \cap S = \verts(\conv(V)) $.
    Then we have $ |V| \le h(S) $.
\end{lem}
\begin{proof}
    Let $ V = \{ v_1,\dotsc,v_m \} $ and for every $ i \in [m] $ let $ C_i := \conv(V \setminus \{v_i\}) $.
    Since $ V = \conv(V) \cap S = \verts(\conv(V)) $, we have $ C_i \cap S = V \setminus \{v_i\} $ for every $ i \in [m]
    $.
    Thus, $ C_1,\dotsc,C_m $ satisfy~\eqref{eqHellyDefinition} and hence $ m \le h(S) $.
\end{proof}

\noindent
We are ready to prove Theorem~\ref{thmMain}.
Let us consider the following algorithm (in fact, we will see that this is indeed a finite algorithm):
\begin{align}
    \nonumber
    & Q_0 \leftarrow \R^n, \, k \leftarrow 1 \\
    \nonumber
    & while \interior(Q_{k-1}) \cap S \ne \emptyset: \\
    \label{algComputeMinimum}
    & \qquad t_k \leftarrow \min \{ f(s) : s \in \interior(Q_{k-1}) \cap S \} \\
    \nonumber
    & \qquad C_k \leftarrow \{ x \in \R^n : f(x) \le t_k \} \\
    \label{algMaxDim}
    & \qquad z_k \leftarrow \text{any } s \in \interior(Q_{k-1}) \cap S \text{ with } f(s) = t_k \text{ such that }
    \dim(F_{C_k}(s)) \text{ is largest possible } \\
    \nonumber
    & \qquad a_k \leftarrow \text{any point in } \relint(\grad f(z_k)) \\
    \nonumber
    & \qquad Q_{k} \leftarrow \{ x \in Q_{k-1} : \langle a_k,x - z_k \rangle \le 0 \} \\
    \nonumber
    & \qquad k \leftarrow k + 1
\end{align}
In the above, $ \relint(\cdot) $ denotes the relative interior and $ \dim(\cdot) $ the
affine dimension.
For a closed convex set $ C \subseteq \R^n $ and a point $ p \in C $ we denote by $ F_C(p) $ the smallest face of $
C $ that contains $ p $.

Remark that iteration $k$ of the algorithm can always be executed, as the set $ Q_k $ is a polyhedron and hence by
the assumption in~(iii) the minimum in~\eqref{algComputeMinimum} always exists. Furthermore, since $ a_k \in
\relint(\grad f(z_k))$ we have
\begin{equation}
    \label{eqFace}
    F_k := F_{C_k}(z_k) = \{ x \in C_k : \langle a_k,x-z_k \rangle = 0 \}
\end{equation}

\smallskip

\noindent
\textit{Claim 1: For every $ k $ we have that $ \langle a_i, z_j - z_i \rangle < 0 $ holds for all $ i,j \le k $
with $ i \ne j $.}

\smallskip

\noindent
Let $ k \ge 2 $ and assume that the claim is satisfied for all $ i,j \le k - 1 $, $ i \ne j $.
Since $ z_k \in \interior( Q_{k-1} )$ and $ a_i \ne \zerovec $ by assumption (i), we have that $ \langle a_i,z_k -
z_i \rangle < 0 $ for every $ i < k $.

It remains to show that $ \langle a_k,z_i-z_k \rangle < 0 $ for every $ i < k $.
Since $a_k\in \grad f(z_k)$, we have that $ \langle a_k,z_i-z_k \rangle \le f(z_i) - f(z_k)$
and for $ i < k $ by \eqref{algComputeMinimum} we have $ f(z_i) \le f(z_k) $.
Therefore $ \langle a_k,z_i-z_k \rangle \le 0 $  and if $ \langle a_k,z_i-z_k \rangle  = 0 $, then $ f(z_i) =
f(z_k) $.
Assume this is the case. Since $ \langle a_i,z_k-z_i \rangle < 0 $ we have $ z_k \notin F_i $ and in particular
\begin{equation}
    \label{eqFacesNotEqual}
    F_i \ne F_k.
\end{equation}
%
%
By~\eqref{eqFace} this means that $ z_i \in F_k $ holds.
Since $ F_i $ is the smallest face that contains $ z_i $, this implies $ F_i \subseteq F_k $.
By~\eqref{algMaxDim}, we have that $ \dim(F_i) \ge \dim(F_k) $ and thus $ F_i = F_k $, a contradiction
to~\eqref{eqFacesNotEqual}.

\smallskip

\noindent
\textit{Claim 2: For every $ k $ we have that $ V := \{ z_1,\dotsc,z_k \} $ satisfies $ V = conv(V) \cap S =
\verts(\conv(V)) $.}

\smallskip

\noindent
It is easy to see that Claim~1 implies $ V = \verts(\conv(V)) $.
For the sake of contradiction, assume there exists some $ s \in (\conv(V) \setminus V) \cap S $.
By Claim~1, we have $ s \in \interior(Q_{k}) $. Therefore by~\eqref{algComputeMinimum} we have $ f(s) \ge t_k $.
Since $ f $ is convex and $ s \in \conv(V) $, this implies $ f(s) = t_k $.
Let $ a \in \relint(\grad f(s)) $ and consider $ F := F_{C_k}(s) = \{ x \in C_k: \langle a, z_i - s \rangle = 0\} $.
Since $ V \subseteq C_k $, we have that $ z_i \in F $ for at least one $ i \in [k] $.
Due to $ \langle a,z_i - s \rangle \le f(z_i) - f(s) $ we must have $ f(z_i) = t_k $ and hence $ F_i \subseteq F $.
By~\eqref{algMaxDim}, we further have $ \dim(F_i) \ge \dim(F) $, which shows $ F_i = F $.
However, by Claim~1 we have $ z_j \notin F_i $ for all $ j \ne i $ and hence $ s \notin F_i $, a contradiction since $ s
\in F $.

\smallskip

\noindent
\textit{Claim 3: The algorithm stops after at most $ h(S) $ iterations and $ Q := Q_k $ is $ S $-free.}

\smallskip

\noindent
Note that the set $ V := \{z_1,\dotsc,z_k\} $ becomes larger in every iteration.
By Claim~2 and Lemma~\ref{lemHelly} we must have $ k \le h(S) $ and hence the algorithm stops after at most $ h(S) $
iterations.
Since the algorithm stops if and only if $ Q_k $ is $ S $-free, this proves the claim.
\hfill $ \qedsymbol $

\section*{Acknowledgements}
This work was performed during a stay of the first three authors at ETH Zürich.
Their visit had been funded by the Research Institute of Mathematics (FIM).
Furthermore, part of this work was supported by NSF grant 1560828 and ONR grant 00014-15-12082.
We are grateful for this support.

\bibliographystyle{plain}
\bibliography{references}

\end{document}